\documentclass[reqno]{amsart}

\usepackage{amsmath,amssymb,amscd,accents,enumitem}
\usepackage{graphicx}
\usepackage[utf8]{inputenc} 

\usepackage[T2A,T1]{fontenc}

\usepackage{etoolbox}
\pretocmd{\section}{\numberwithin{Cor}{section}}{}{}
\pretocmd{\subsection}{\numberwithin{Cor}{subsection}}{}{}
\pretocmd{\subsubsection}{\numberwithin{Cor}{subsubsection}}{}{}

\DeclareSymbolFont{cyrillic}{T2A}{cmr}{m}{n}
\SetSymbolFont{cyrillic}{bold}{T2A}{cmr}{bx}{n}
\DeclareMathSymbol{\B}{\mathord}{cyrillic}{193}
\def\so{\raisebox{.5ex}{\scalebox{0.6}{\#}}\kern-.02em{}o}

\DeclareGraphicsExtensions{.eps} \usepackage{mathrsfs}
\usepackage[mathcal]{eucal} 
\usepackage{esint}
\usepackage[numbers,sort]{natbib}

\usepackage[pdfencoding=unicode, psdextra, pdfusetitle]{hyperref}
\hypersetup{
  colorlinks   = true,
  linkcolor    = [RGB]{240,50,240},
  citecolor    = [RGB]{100,0,200}
}

\usepackage{cleveref}

\usepackage{bookmark}
\renewcommand{\paragraph}[1]{\medskip\noindent#1}
\renewcommand{\subparagraph}[1]{\medskip\noindent\textit{#1}}
\newcommand{\mycite}[2]{\cite[#1]{#2}}

\newtheorem{Thm}{Theorem}{\bfseries}{\itshape}
\newtheorem{CorThm}[Thm]{Corollary}{\bfseries}{\itshape}
\newtheorem{Cor}{Corollary}{\bfseries}{\itshape}
\newtheorem{ThmCor}[Cor]{Theorem}{\bfseries}{\itshape}
\newtheorem{Prop}[Cor]{Proposition}{\bfseries}{\itshape}
{\bfseries}{\itshape}
{\bfseries}{\itshape}
{\bfseries}{\itshape}
\newtheorem{Conj}[Cor]{Conjecture}{\bfseries}{\itshape}
\newtheorem{Def}[Cor]{Definition}{\bfseries}{\rmfamily}
\newtheorem{Notation}[Cor]{Notation}{\bfseries}{\rmfamily}
{\scshape}{\rmfamily}
\newtheorem{Rem}[Cor]{Remark}{\scshape}{\rmfamily}
\newtheorem*{Rem*}{Remark}{\scshape}{\rmfamily}
{\scshape}{\rmfamily}
\newtheorem{Claim}[Cor]{Claim}{\bfseries}{\itshape}

\newcommand{\iref}[2][]{%
  \def\tempA##1 ##2\relax{%
    \ifstrequal{##1}{Equation}%
        {\eqref{#1#2}}{%
    \ifstrequal{##1}{Section}
        {\S\ref{#1#2}}{%
    \ifstrequal{##1}{Subsection}
        {\S\ref{#1#2}}{%
    \ifstrequal{##1}{Subsubection}
        {\S\ref{#1#2}}%
        {##1~\ref{#1#2}}%
  }}}}%
  \expandafter\tempA\string#2\relax
}

\begin{document}
\title[Polynomial Wolff Axioms For \texorpdfstring{$\delta$}{delta}-Tubes With \#o-minimality]{Establishing the Polynomial Wolff Axioms for \texorpdfstring{$\delta$}{delta}-Separated \texorpdfstring{$\delta$}{delta}-Tubes With \#o-minimality}
\author{Gal Binyamini, Yuval Salant} 
\address{Weizmann Institute of Science, Rehovot, Israel}
\email{gal.binyamini@weizmann.ac.il}
 \thanks{
  Funded by the European Union (ERC, SharpOS, 101087910), and by the
  ISRAEL SCIENCE FOUNDATION (grant No. 2067/23).}
\date{\today}

\begin{abstract}
We establish the full version of a conjecture of Guth and Zahl, giving a lower bound for the volume of a semialgebraic set that has a large intersection with a collection of $\delta$-separated $\delta$-tubes. Our proof uses o-minimal methods to simplify the proof of Katz and Rogers, who proved the conjecture up to a small factor. We also establish that the constants depend polynomially on the complexity of the semialgebraic set, and more generally in the \so-minimal setting.
\end{abstract}

\maketitle

\section{Introduction}

\subsection{Main Results}

In \cite{KatzRogers2018OnThePolynomialWolffAxioms}, Katz and Rogers confirm a near-optimal version of the following conjecture made by Guth and Zahl (see \cite[Conjecture 1.2]{GuthZahl2018PolynomialWolffAxiomsAndKakeyaTypeEstimatesInR4}). We say that a collection of $\delta$-tubes ($\delta$-neighborhoods of unit segments) in $\mathbb R^n$ is $\delta$-separated if the tubes point in $\delta$-separated directions. That is, the angle between the central lines of any two distinct tubes is at least $\delta$. We also say that a semialgebraic set in $\mathbb R^n$ has complexity (at most) $E$ if it can be defined using intersections and unions of polynomial inequalities in $n$ variables whose sum of degrees is at most $E$.

\begin{Conj}[$\delta$-Separation Implies the Polynomial Wolff Axioms]
Let $1\geq\lambda\geq\delta>0$, let $\mathbb T$ be a $\delta$-separated collection of $\delta$-tubes in $\mathbb R^n$, and let $S\subseteq\mathbb R^n$ be a semialgebraic set of complexity $E$. Then
\[
    |S|_{\mathbb R^n}
        \geq
    \#\left\{T\in\mathbb T:|T\cap S|_{\mathbb R^n}\geq\lambda|T|_{\mathbb R^n}\right\}\cdot\delta^{n-1}\cdot\lambda^n\cdot\frac1{\mathrm{const}_{n,E}},
\]
where $|X|_{\mathbb R^n}$ is the Lebesgue measure of $X$, $\#A$ is the number of elements in $A$, and $\mathrm{const}_{n,E}$ is a non-negative constant that depends only on $n,E$.
\end{Conj}

Katz and Rogers proved this conjecture up to a factor of $\delta^\varepsilon$ (that is, they showed that $|S|_{\mathbb R^n}\geq\#\left\{T\in\mathbb T:|T\cap S|_{\mathbb R^n}\geq\lambda|T|_{\mathbb R^n}\right\}\cdot\delta^{n-1+\varepsilon}\cdot\lambda^n\cdot\frac1{\mathrm{const}_{n,E,\varepsilon}}$). A generalized version of this conjecture was then used to establish partial results regarding the Kakeya conjecture (see, for example, \cite{HickmanRogersZhang2022ImprovedBoundsForTheKakeyaMaximalConjectureInHigherDimensions}).

In this paper, we follow the proof from \cite{KatzRogers2018OnThePolynomialWolffAxioms} and a newer version of this proof, as found in the proof of \cite[Theorem 1.4]{HickmanRogersZhang2022ImprovedBoundsForTheKakeyaMaximalConjectureInHigherDimensions}. We use the more general language of o-minimality and \#o-minimality (see \iref{Subsection motivation and definition of o-minimality and sharp o-minimality}), and by simplifying the arguments we prove the full version of the conjecture with a constant of $\frac1{\operatorname{poly}_n(E)}$, where $\operatorname{poly}_n(E)$ is a polynomial in $E$ when $n$ is fixed; see \iref{Subsection new notions}.

\begin{Rem}
For readers interested primarily in the original conjecture, it is safe to restrict attention everywhere in this paper to semialgebraic sets, as these indeed constitute an o-minimal and a \#o-minimal structure; see \cite[Example 1.5.1]{V2BinyaminiNovikovZak2026SharplyOMminimalStructuresAndSharpCellularDecomposition}. Briefly, in this case, the format of the set can be taken as the ambient dimension and the degree can be taken as the sum of the degrees in a presentation of the semialgebraic set using equalities and inequalities.
\end{Rem}

We establish the following theorem:

\begin{Thm}[Main theorem]\label{Theorem main result}
Let $1\geq\lambda,\delta>0$ and let $S\in\mathbb R^n$ be definable in an o-minimal structure. Let $\mathbb T$ be a $\delta$-separated collection of $\delta$-tubes in $\mathbb R^n$ such that
\[
    \forall T\in\mathbb T:\quad
    |T\cap S|_{\mathbb R^n}
        \geq
    \lambda|T|_{\mathbb R^n}.
\]
Then
\[
    |S|_{\mathbb R^n}
        \geq
    \frac1{C(S)}\#\mathbb T\cdot\lambda^n\cdot\delta^{n-1}.
\]
Here, $C(S)>0$ is a constant depending on $S$. If $S$ is definable in a \so-minimal structure with format $F$ and degree $E$, then the constant $C(S)$ can be taken to be $\operatorname{poly}_F(E)$.
\end{Thm}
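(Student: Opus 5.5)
We follow the Katz--Rogers / Hickman--Rogers--Zhang strategy, but replace the polynomial-partitioning and multi-scale steps by a single cell decomposition. First we reduce to $S$ of bounded complexity with good geometry. Apply a (\#)o-minimal cellular decomposition to $S$. This writes $S$ as a union of $\operatorname{poly}_F(E)$ cells. In the \#o-minimal case the number of cells and their format/degree are polynomially bounded by the sharp cellular decomposition theorem. By pigeonholing over cells, after losing a factor $\operatorname{poly}_F(E)$ in $\lambda$ and in $\#\mathbb T$, we may assume $S$ is a single cell $\mathcal C$. For a positive fraction of the tubes we then have $|T\cap\mathcal C|\ge \lambda|T|/\operatorname{poly}_F(E)$. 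Cells of dimension $<n$ have measure zero, so $\mathcal C$ is open, of the form $\{(x',x_n): x'\in\mathcal C',\ f(x')<x_n<g(x')\}$.

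Next we cover $\mathcal C$ by few Lipschitz pieces. Use the o-minimal (uniform) Lipschitz/preparation results, or the simpler fact that after a generic linear change of coordinates chosen among $O_n(1)$ directions, a definable set can be split into $\operatorname{poly}_F(E)$ cells whose boundary functions are $O_n(1)$-Lipschitz. Again pigeonhole to a single such cell.

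The core estimate then has two parts. We make it via a projection/slicing argument along the tube directions, since a direct bound by induction on dimension is not available.

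\begin{enumerate}
\item For each $T$ with axis direction $e_T$, the set $T\cap\mathcal C$ is, after rescaling $T$ to the unit cylinder, a definable subset of complexity $\operatorname{poly}_F(E)$ with measure $\gtrsim\lambda$. Fubini along the axis gives many lines parallel to $e_T$ whose intersection with $\mathcal C$ has length $\gtrsim\lambda$. Each such intersection is a union of at most $\operatorname{poly}_F(E)$ intervals by uniform finiteness, with the \#o-minimal bound in the sharp case. So one interval $I$ has length $\gtrsim\lambda/\operatorname{poly}_F(E)$.
\item Consider the definable set $\Sigma\subset \mathcal C\times S^{n-1}$ of pairs $(x,v)$ such that the segment $x+[0,\lambda']v$ lies in $\mathcal C$, where $\lambda'=c\lambda/\operatorname{poly}(E)$. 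The tubes give $\#\mathbb T$ directions that are $\delta$-separated. Around each of them, a $\delta$-ball of directions $v$ and a set of $x$ of measure $\gtrsim\lambda\delta^{n-1}$ lie in $\Sigma$ up to a fattening.
\end{enumerate}

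To obtain the conclusion we want the estimate $|\mathcal C|\gtrsim\lambda^{n}\cdot|\pi(\Sigma)|$, where $\pi$ projects to $S^{n-1}$ and $|\pi(\Sigma)|\gtrsim\#\mathbb T\,\delta^{n-1}$ by $\delta$-separation. The idea is that a definable set containing segments of length $\lambda'$ in a set $D$ of directions of measure $\sigma$ has volume $\gtrsim \lambda'^{n}\sigma$. This should follow from definable choice: pick a definable selection $v\mapsto x(v)$ on $\pi(\Sigma)$. Then decompose $\pi(\Sigma)$ into $\operatorname{poly}(E)$ cells on which $x(v)$ is $C^1$. On each cell, the map $(v,t)\mapsto x(v)+tv$ for $t\in[0,\lambda']$ has Jacobian $\approx t^{n-1}$ whenever $x(v)$ has small derivative. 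Its generic fibres have $\le\operatorname{poly}(E)$ points by uniform finiteness. So the area formula yields volume $\gtrsim\sigma\lambda'^{n}/\operatorname{poly}(E)$.

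The main obstacle is controlling the derivative of $x(v)$, which may make the Jacobian degenerate. To handle it I would first try replacing the selection by the "midpoint of a maximal segment in direction $v$" and use the Lipschitz-cell structure. Alternatively, one can note that the image of $\{(v,t)\}$ over a set of directions is itself a Kakeya-type set with segment length $\lambda'$. For the degenerate pieces where $x(v)$ moves fast, one then runs a bush/hairbrush counting via slicing by hyperplanes, applying induction on $n$ with uniformly definable families. In that case the $\delta$-discretisation, fattening to $\delta$-tubes, is what keeps the bound at $\delta^{n-1}$ without an $\varepsilon$ loss.

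Tracking constants, each step loses only $\operatorname{poly}_F(E)$ factors, and the format is always bounded in terms of $F,n$. This gives $C(S)=\operatorname{poly}_F(E)$. In the plain o-minimal case the same argument works with the constants furnished by uniform finiteness for the fixed definable family containing $S$.
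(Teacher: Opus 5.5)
Your outline has the same skeleton as the paper: Fubini on each tube plus uniform finiteness to find a segment of length $\lambda'\gtrsim\lambda/\operatorname{poly}_F(E)$ inside $S$, definable choice of a starting point $x(v)$ for each direction, a cell decomposition on which $x(v)$ is $C^1$, and fibre counting to pass from $\int|J|$ to the volume of the union. The preliminary reductions to a single cell and to Lipschitz cells are not needed. However, the step you yourself flag as the main obstacle is left open, and it is the heart of the proof. Your proposed remedies do not address it. $J$ can genuinely vanish inside $[0,\lambda']$: if the selected segments form a bush through a point $p$ at height $t_0$, then $x(v)=p-t_0v$ and $J=(t-t_0)^{n-1}$. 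Neither a midpoint selection nor Lipschitz boundaries of the cell rule this out. The hairbrush/induction-on-$n$ alternative is not specified enough to evaluate.

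The missing idea is that no control of $dx$ is needed. Take an orthonormal frame $e_1,\dots,e_{n-1}$ of $T_vS^{n-1}=v^{\perp}$. The differential of $\Phi(v,t)=x(v)+tv$ sends $e_j\mapsto dx(e_j)+te_j$ and $\partial_t\mapsto v$. Hence $J(v,t)=\det\bigl(A(v)+tI_{n-1}\bigr)$ with $A_{ij}=\langle e_i,dx(e_j)\rangle$, which is a monic polynomial of degree $n-1$ in $t$. Excise the $\frac{\lambda'}{4(n-1)}$-neighbourhoods of the real parts of its roots. What remains is a set of $t\in[0,\lambda']$ of measure at least $\lambda'/2$ on which $|J|\ge\bigl(\frac{\lambda'}{4(n-1)}\bigr)^{n-1}$. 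Hence $\int_0^{\lambda'}|J|\,dt\ge c_n(\lambda')^{n}$ for every $v$, whatever $A(v)$ is. Now add fibre counting (points where $J\neq0$ are isolated in their fibres, which have $\operatorname{poly}_F(E)$ connected components). Restrict to the cell carrying a $1/\operatorname{poly}_F(E)$ fraction of $|\pi(\Sigma)|$. This gives $|S|\ge\operatorname{poly}_F(E)^{-1}(\lambda')^{n}|\pi(\Sigma)|$. This is exactly the paper's argument. There it is written with directions $(d,1)$, segments starting on the floor of horizontal slabs of height $\sim\lambda$, and $g_t(d)=f(d)+td$, so that $Dg_t=Df+tI_m$. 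Your slab-free version with free starting points works equally well.

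One smaller point: to get $|\pi(\Sigma)|\gtrsim\#\mathbb T\,\delta^{n-1}$ you need, for every $v$ within $c\delta$ of $e_T$, a segment in direction $v$ contained in $S$ itself, not in a fattening. Your step 1 only treats $v=e_T$. The same Fubini argument works for lines in direction $v$, since those meeting $T$ sweep out a region of volume $O_n(\delta^{n-1})$ when $|v-e_T|\lesssim\delta$.
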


In \cite{KatzRogers2018OnThePolynomialWolffAxioms}, Katz and Rogers used the Wongkew Lemma \cite[Main Theorem]{Wongkew1993VolumesOfTubularNeighbourhoodsOfRealAlgebraicVarieties}, in the case where $S$ is the intersection of a $\delta$-neighborhood of a degree (at most) $D$ algebraic variety of dimension $k$ with a ball of constant radius, to find an upper bound of $|S|\leq\operatorname{poly}_n(D)\cdot\delta^{n-k}$. This allowed them to prove a second conjecture by Guth, which asked for an upper bound of $\mathrm{const}_{n,D,\varepsilon}\cdot\delta^{1-k-\varepsilon}$ for the number of $\delta$-separated $\delta$-tubes that can be contained in such a set $S$. The result in our paper improves this bound by eliminating the $\varepsilon$ and establishing a constant that is polynomial in $D$ when $n$ is fixed. That is, by combining \iref{Theorem main result} with the same method as in \cite{KatzRogers2018OnThePolynomialWolffAxioms}, we get the following result:

\begin{CorThm}
Let $1\geq\delta>0$ and let $S\subseteq\mathbb R^n$ be the intersection of the $\delta$-neighborhood of an algebraic variety of dimension $k$ and degree at most $D$ with a closed ball of constant radius. Let $\mathbb T$ be a collection of $\delta$-tubes that are contained in $S$ and point in $\delta$-separated directions. Then
\[
    \#\mathbb T
        \leq
    \operatorname{poly}_n(D)\cdot\delta^{1-k}.
\]
\end{CorThm}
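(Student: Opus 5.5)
The corollary follows by combining an upper bound on the volume of $S$ with the lower bound of \iref{Theorem main result}, in the way Katz and Rogers did. Apply \iref{Theorem main result} with $\lambda=1$. Every tube $T\in\mathbb T$ is contained in $S$, so $|T\cap S|_{\mathbb R^n}=|T|_{\mathbb R^n}$ and the hypothesis holds trivially. This gives
\[
    |S|_{\mathbb R^n}\geq\frac1{C(S)}\,\#\mathbb T\cdot\delta^{n-1},
\]
and hence $\#\mathbb T\leq C(S)\,|S|_{\mathbb R^n}\,\delta^{1-n}$. For this to give the claimed bound we need two inputs. First, $C(S)\leq\operatorname{poly}_n(D)$. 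Second, $|S|_{\mathbb R^n}\leq\operatorname{poly}_n(D)\cdot\delta^{n-k}$. With both in hand, multiplying yields $\#\mathbb T\leq\operatorname{poly}_n(D)\cdot\delta^{n-k}\cdot\delta^{1-n}=\operatorname{poly}_n(D)\cdot\delta^{1-k}$.

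The volume bound is exactly Wongkew's theorem \cite[Main Theorem]{Wongkew1993VolumesOfTubularNeighbourhoodsOfRealAlgebraicVarieties}. The ball has constant radius, so its normalization only changes constants depending on $n$, and for $\delta\leq1$ the theorem gives $|S|\leq\operatorname{poly}_n(D)\delta^{n-k}$.

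For the constant, we must check that $S$ is semialgebraic with complexity polynomial in $D$ and format depending only on $n$. Then the \#o-minimal clause of \iref{Theorem main result}, applied in the semialgebraic structure (which is \#o-minimal by the earlier remark), gives $C(S)=\operatorname{poly}_n(D)$. Write $V$ for the variety. The $\delta$-neighborhood of $V$ is
\[
    \{x:\exists y\,(y\in V\wedge |x-y|^2<\delta^2)\}.
\]
Here $V$ is cut out by polynomials whose degrees are bounded polynomially in $D$; if $V$ is given only by its degree, one first replaces it by a presentation of bounded complexity. The condition $|x-y|^2<\delta^2$ is quadratic in $(x,y)$, with $\delta$ entering only as a parameter. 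In a \#o-minimal structure, projection preserves format up to a bound depending on the format and raises degree only polynomially. So the neighborhood is definable with format $O(n)$ and degree $\operatorname{poly}_n(D)$. Intersecting with a ball adds one quadratic inequality.

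The only delicate step is making sure $C(S)$ does not depend on $\delta$. This requires that the \so-minimal bound in \iref{Theorem main result} be uniform over definable families, that is, that it depend only on format and degree and not on the specific parameters. The parameter $\delta$ should be treated as an extra variable of the family, so that format and degree stay independent of it. This is built into the definition of \#o-minimal degree, so I expect this step to be a routine verification rather than a genuine obstacle.
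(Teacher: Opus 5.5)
Your proposal is correct and is essentially the argument the paper has in mind, following Katz--Rogers. You apply the main theorem with $\lambda=1$ (the hypothesis holds trivially since each tube lies in $S$), bound $|S|\le\operatorname{poly}_n(D)\,\delta^{n-k}$ by Wongkew's theorem, and observe that $S$ is semialgebraic with format $\mathrm{const}_n$ and degree $\operatorname{poly}_n(D)$ uniformly in $\delta$, so that $C(S)=\operatorname{poly}_n(D)$. The uniformity holds because polynomials with arbitrary real coefficients, such as $|x-y|^2-\delta^2$, have degree independent of those coefficients, and projection does not increase format or degree.
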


\subsection{Overview}\label{Subsection new notions}

We briefly address the two key points where our arguments differ from \cite{KatzRogers2018OnThePolynomialWolffAxioms}, allowing us to avoid the additional $\delta^{\varepsilon}$ term.

In the first half of \cite{KatzRogers2018OnThePolynomialWolffAxioms}, the authors integrate over a semialgebraic set. This is done in two steps: first, the Yomdin-Gromov algebraic lemma \cite[Lemma 2.3]{KatzRogers2018OnThePolynomialWolffAxioms} is used to obtain a $C^r$-parametrization. This map is then approximated by a polynomial, allowing the authors to use the Bézout theorem to obtain an effective estimate. In order for this approximation to be sufficiently accurate, it is necessary to use a large $r=\theta(\frac1\varepsilon)$ which translates into a $\mathrm{const}_\varepsilon\delta^\varepsilon$ error term. We work instead directly with the relevant semialgebraic set, replacing the use of the Bézout theorem by more general o-minimal (or \so-minimal) bounds, thus avoiding the approximation step altogether.

In the second half of \cite{KatzRogers2018OnThePolynomialWolffAxioms}, there is another step involving a dyadic pigeonhole principle which introduces another $\log\delta^{-1}$ error term. In the original argument, this was absorbed into the $\delta^\varepsilon$ error term, so there was little point in attempting to improve this step. However, we show that a more refined geometric approach can be used to eliminate this additional error term, thus recovering the full original conjecture.

It turns out that all of the arguments in the paper continue to hold in the more general context of o-minimal structures (or, for polynomial bounds on constants, \so-minimal structures), so we have chosen to present the results in this generality. We present the arguments in the \so-minimal case, the o-minimal case being entirely analogous.

\subsection{AI Disclosure}

ChatGPT was used to preform basic English and typographic edits and minor formatting. No substantive mathematical work was preformed with the help of AI systems.

\section{Preliminaries}

\subsection{Asymptotic growth}

We use the notation $\mathrm{const}_{a_1,...,a_k}$ to denote any quantity that is bounded from above by a specific function that depends only on $a_1,...,a_k$ and $\operatorname{poly}_{a_1,...,a_k}(b_1,...,b_k)$ to denote any quantity which, once $a_1,...,a_k$ are fixed, becomes bounded from above by a specific polynomial in $b_1,...,b_k$. All constants and polynomials in this text can be made explicit in principle.

\subsection{O-minimality and \#o-minimality}\label{Subsection motivation and definition of o-minimality and sharp o-minimality}

We refer the reader to \cite{Vdd1998TameTopologyAndOMinimalStructures} for an introduction to the theory of o-minimal structures, and to \cite{V2BinyaminiNovikovZak2026SharplyOMminimalStructuresAndSharpCellularDecomposition} for the theory of \so-minimal structures. All of the o-minimal structures considered in this paper are taken over the real numbers. We list below the main results needed for our paper, focusing on the case of a fixed \so-minimal structure $\mathfrak S$. We briefly indicate the corresponding statements in the o-minimal case. By \cite[Remark 1.37]{V2BinyaminiNovikovZak2026SharplyOMminimalStructuresAndSharpCellularDecomposition}, we may also assume without loss of generality that $\mathfrak S$ has sharp cell decomposition.

\begin{ThmCor}[Definable Choice, \mycite{Proposition 3.1}{V2BinyaminiNovikovZak2026SharplyOMminimalStructuresAndSharpCellularDecomposition}]\label{Theorem definable choice}
Let $X\subseteq\mathbb R^a\times\mathbb R^b$ be a definable set of degree $E$ and format $F$, and let $\operatorname{\Pi}:\mathbb R^a\times\mathbb R^b\rightarrow\mathbb R^b$ be the projection onto the last $b$ coordinates.
Then there exists a definable section $f:\operatorname{\Pi}(X)\rightarrow\mathbb R^a$ of degree $\operatorname{poly}_F(E)$ and format $\mathrm{const}_F$. That is, $f$ satisfies $(f(p),p)\in X$ for each $p\in\Pi(X)$.
\end{ThmCor}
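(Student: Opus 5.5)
The plan is to prove definable choice by the standard o-minimal argument: induct on the fiber dimension $a$, keeping track at each step of how format and degree grow. Each operation used will be one of the basic \so-minimal operations (boolean combinations, projections, products, graphs of coordinate functions), which raise the format by $\mathrm{const}_F$ and the degree by $\operatorname{poly}_F(E)$. The main input is sharp cell decomposition, which we may assume $\mathfrak S$ has.

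First, reduce to $a=1$. Write $\mathbb R^a=\mathbb R^{a-1}\times\mathbb R$ and let $X'\subseteq\mathbb R\times\mathbb R^b$ be the projection of $X$ forgetting the first $a-1$ coordinates. Then $X'$ has format $\mathrm{const}_F$ and degree $\operatorname{poly}_F(E)$, and $\Pi(X')=\Pi(X)$.
\begin{itemize}
\item The case $a=1$ gives a section $g:\Pi(X)\to\mathbb R$ of $X'$.
\item Set $X''=\{(y,p)\in\mathbb R^{a-1}\times\mathbb R^b : (y,g(p),p)\in X\}$. This is definable from $X$ and the graph of $g$, with controlled format and degree, and $\Pi(X'')=\Pi(X)$.
\item Induction on $a$ gives a section $h$ of $X''$.
\item Then $f=(h,g)$ is the required section.
\end{itemize}
Since $a\le F$, the induction has at most $F$ steps. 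A composition of at most $F$ polynomial degree bounds, each depending only on $F$, is still $\operatorname{poly}_F(E)$, and the format stays $\mathrm{const}_F$.

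For $a=1$, apply sharp cell decomposition to $X\subseteq\mathbb R^{b}\times\mathbb R$, with the variables ordered so that the fiber coordinate comes last. This gives $\operatorname{poly}_F(E)$ cells, each of format $\mathrm{const}_F$ and degree $\operatorname{poly}_F(E)$. Each cell is either a graph of a definable function $\phi$ over a base cell $C\subseteq\mathbb R^b$, or a band $(\phi_1,\phi_2)$ over $C$, with $\phi_i$ possibly $\pm\infty$. The base cells partition $\Pi(X)$ into finitely many sets. On each base cell $C$, fix one cell of $X$ lying over it and choose a point in that cell's fiber:
\begin{itemize}
\item for a graph, take $\phi$;
\item for a bounded band, take the midpoint $(\phi_1+\phi_2)/2$;
\item for a half-infinite band, take $\phi_1+1$ or $\phi_2-1$;
\item for $(-\infty,\infty)$, take $0$.
\end{itemize}
To make the choice of cell over $C$ canonical, use the first cell in the ordering of the decomposition. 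The section $f$ is then defined piecewise over the $\operatorname{poly}_F(E)$ base cells. Its graph is a union of $\operatorname{poly}_F(E)$ sets, each of format $\mathrm{const}_F$ and degree $\operatorname{poly}_F(E)$, so the total degree is $\operatorname{poly}_F(E)$.

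The main obstacle is the bookkeeping. We must confirm that the functions produced by cell decomposition, their averages, and the finite piecewise gluing all remain within format $\mathrm{const}_F$ and degree $\operatorname{poly}_F(E)$ under the axioms of \cite{V2BinyaminiNovikovZak2026SharplyOMminimalStructuresAndSharpCellularDecomposition}. In particular, degree is additive under unions, and a sum of two definable functions is again controlled, by taking the projection of the intersection of their graphs with the graph of addition. Once these closure properties are checked, the argument above goes through verbatim. In the plain o-minimal case it yields the classical definable choice theorem with no bounds.
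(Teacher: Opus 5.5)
Your argument is correct, but the paper does not prove this statement; it simply quotes \cite[Proposition~3.1]{V2BinyaminiNovikovZak2026SharplyOMminimalStructuresAndSharpCellularDecomposition}. Your induction on $a$ is the classical reduction from \cite[Ch.~6]{Vdd1998TameTopologyAndOMinimalStructures}: choose the last fiber coordinate via the case $a=1$, restrict $X$ to the graph of $g$, and recurse at most $a\le F$ times. The genuine difference is in the base case.

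The classical proof uses no cell decomposition. For $a=1$ it takes $f(p)=\min X_p$ if this minimum exists. Otherwise it sets $c=\inf X_p$ and $d=\sup\{y>c:(c,y)\subseteq X_p\}$, and takes $(c+d)/2$, $d-1$, $c+1$ or $0$ according to which of $c,d$ are infinite. This is one first-order formula of fixed shape in $X$. So the \so-minimal axioms alone (boolean operations, products, projections, zero sets of fixed polynomials) already give format $\mathrm{const}_F$ and degree $\operatorname{poly}_F(E)$. The degree is even linear in $E$, since $X$ appears boundedly often and degree is additive. This works in any \so-minimal structure, with or without \#CD.

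Your base case instead spends \#CD in its cylindrical form, where cells are graphs or bands over cells of $\mathbb R^b$. That is harmless here, because the paper assumes \#CD without loss of generality, and it makes the chosen point completely explicit. It does, however, use a much deeper input than needed.

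The bookkeeping you flag is routine. If the decomposition is of all of $\mathbb R^{b+1}$ and compatible with $X$, the graphs of finite $\phi_1,\phi_2$ over $C$ are themselves cells. Otherwise $\phi_1(p)=\inf D_p$ is cut out of the band $D$ by a fixed formula, and the midpoint is then obtained as you describe.
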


In the o-minimal case, the same result holds without the corresponding bounds on the format and degree.

For the next theorem, we will need the notion of cellular decomposition in \so-minimality: see \cite[\S1.7]{V2BinyaminiNovikovZak2026SharplyOMminimalStructuresAndSharpCellularDecomposition} for the general definition and key properties. 

\begin{ThmCor}[$C^k$ cell decomposition]\label{Theorem Ck cell decomposition}
Let $A$ be a definable set of degree $E_A$ and format $F_A$ in a \#o-minimal structure that has \#CD, and let $k\in\mathbb N$. Then $A$ can be partitioned into $\operatorname{poly}_{F_A,k}(E_A)$ cells $B_1,...,B_l$ that are of degree $\operatorname{poly}_{F_A,k}(E_A)$ and format $\mathrm{const}_{F_A,k}$, so that each $B_i$ is a $C^k$-embedding of an open cube $(0,1)^{d_i}$ via a definable map $\psi_i$ of degree $\operatorname{poly}_{F_A,k}(E_A)$ and format $\mathrm{const}_{F_A,k}$.

Let $f:A\rightarrow\mathbb R^b$ be a definable function of degree $E_f$ and format $F_f$. Then the cell decomposition of $A$ can be chosen so that the map $f\circ\psi_i:(0,1)^{d_i}\to\mathbb R^b$ is $C^k$ for each $i$. In this case, we will have $\operatorname{poly}_{F_A,F_f,k}(E_A,E_f)$ cells of degree $\operatorname{poly}_{F_A,F_f,k}(E_A,E_f)$ and format $\mathrm{const}_{F_A,F_f,k}$ and the $C^k$-embeddings will be of degree $\operatorname{poly}_{F_A,F_f,k}(E_A,E_f)$ and format $\mathrm{const}_{F_A,F_f,k}$.
\end{ThmCor}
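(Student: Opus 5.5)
This statement is essentially a repackaging of the sharp cellular decomposition theory of \cite{V2BinyaminiNovikovZak2026SharplyOMminimalStructuresAndSharpCellularDecomposition}, so the plan is to reduce it to the \#CD property together with a $C^k$-smoothing step. We will take as given that $\mathfrak S$ has \#CD (justified by \cite[Remark 1.37]{V2BinyaminiNovikovZak2026SharplyOMminimalStructuresAndSharpCellularDecomposition}). \#CD means that any finite collection of definable sets admits a compatible cylindrical cell decomposition. The number of cells and their degrees must be $\operatorname{poly}_F(E)$, and their formats must be $\mathrm{const}_F$.

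\textbf{Step 1 (plain decomposition).} Apply \#CD to $A$ to obtain a cylindrical cell decomposition of $\mathbb R^{F_A}$ compatible with $A$. Keep the cells contained in $A$.

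\textbf{Step 2 (upgrade to $C^k$).} Each cell is the graph of, or the band between, definable functions on a lower-dimensional cell. We want these boundary functions to be $C^k$. Consider the set of points where a given boundary function fails to be $C^k$. It is definable, with complexity controlled by $\operatorname{poly}_{F,k}(E)$: it is expressed by first-order formulas in the partial derivatives up to order $k$, which are themselves definable with polynomially bounded degree. It also has strictly smaller dimension, by the o-minimal generic smoothness theorem. We then refine the decomposition by this bad set and induct on dimension, applying \#CD again at each stage. The number of refinement rounds is bounded by $\mathrm{const}_{F}$ (once per dimension and per level of the cylinder). Since a polynomial of a polynomial is still polynomial, the final count and degree remain $\operatorname{poly}_{F_A,k}(E_A)$, with format $\mathrm{const}_{F_A,k}$.

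\textbf{Step 3 (cube parametrization).} For a $C^k$ cylindrical cell $C$ of dimension $d$, build $\psi:(0,1)^d\to C$ inductively. Over a parametrization $\phi$ of the base, send $t\in(0,1)$ to $(1-t)\,g_1(\phi(x))+t\,g_2(\phi(x))$ on a band between $g_1<g_2$, and send the base point to $g(\phi(x))$ on a graph. Unbounded bands ($g_i=\pm\infty$) are first handled by composing with a fixed semialgebraic diffeomorphism $\mathbb R\to(-1,1)$. The resulting $\psi$ is a $C^k$ embedding, since it is injective with injective differential and its inverse is a continuous projection combined with an affine rescaling. It is defined by a formula of bounded size in the boundary functions, so its degree is $\operatorname{poly}$ and its format is $\mathrm{const}$.

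\textbf{Step 4 (the function $f$).} Apply the same argument to the graph of $f$, or equivalently make the decomposition compatible with the definable set of points where $f$ is not $C^k$ along cells. Then refine as in Step 2 so that $f\circ\psi_i$ is $C^k$. The complexity bounds now depend on both $(F_A,E_A)$ and $(F_f,E_f)$.

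The main obstacle is Step 2. It requires verifying that the non-$C^k$ locus has bounded complexity in the \#o-minimal sense and has lower dimension. It also requires checking that iterated refinement terminates within $\mathrm{const}_{F,k}$ rounds, so that the polynomial bounds are preserved rather than compounding. If this turns out to be delicate, the fallback is to cite directly the smooth \#CD statement from \cite[\S1.7]{V2BinyaminiNovikovZak2026SharplyOMminimalStructuresAndSharpCellularDecomposition}, which is likely where the result is proved.
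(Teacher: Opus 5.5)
Your proposal follows the paper's route: the paper likewise takes the plain decomposition from the \#CD theorem \cite[Theorem 1.35]{V2BinyaminiNovikovZak2026SharplyOMminimalStructuresAndSharpCellularDecomposition}, refines to $C^k$ cells by the standard o-minimal argument \cite[\S6.2]{Coste1999Ominimal}, and handles $f$ by applying the first statement to its graph. Your Step 3 construction of the cube parametrizations $\psi_i$ spells out a detail the paper leaves implicit; one small slip is that in Step 1 the ambient space is $\mathbb R^n$ with $n\le F_A$, not $\mathbb R^{F_A}$.
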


In the o-minimal case, the same result holds without the corresponding bounds on the format and degree.

\begin{proof}
The first statement, without the assumption of $C^k$-regularity, follows from \cite[Theorem 1.35]{V2BinyaminiNovikovZak2026SharplyOMminimalStructuresAndSharpCellularDecomposition}. One can then further decompose the cells into $C^k$-smooth cells as in \cite[\S6.2]{Coste1999Ominimal}. For the second statement, it suffices to apply the first statement to the graph of $f$.
\end{proof}

\begin{Cor}\label{Corollary finite amount of connected components}
Any definable set of format $F$ and degree $E$ has at most $\operatorname{poly}_F(E)$ connected components.
\end{Cor}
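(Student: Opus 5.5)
This follows directly from the $C^k$ cell decomposition theorem (\iref{Theorem Ck cell decomposition}), applied with $k=0$ (or any fixed $k$, say $k=1$). Let $A$ be a definable set of format $F$ and degree $E$. The theorem partitions $A$ into $l=\operatorname{poly}_{F}(E)$ cells $B_1,\dots,B_l$. Each cell is the image of the connected open cube $(0,1)^{d_i}$ under a continuous (indeed $C^k$) embedding $\psi_i$, so each $B_i$ is connected.

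Every connected component of $A$ is a union of some of the $B_i$. Indeed, each connected $B_i$ lies entirely inside a single connected component of $A$. Since the cells cover $A$, each component contains at least one cell, and distinct components contain disjoint families of cells. Hence the map sending each component to the set of cells it contains is injective into nonempty subsets of a partition. It follows that the number of connected components is at most $l=\operatorname{poly}_F(E)$.

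The only point that needs care is that the cells are nonempty and connected. This holds because each cell is a homeomorphic image of a cube; in the degenerate case $d_i=0$, the cube $(0,1)^0$ is a single point, which is also connected. There is no real obstacle here: the polynomial bound is inherited directly from the bound on the number of cells. In the o-minimal case the same argument gives finiteness of the number of components, without an explicit bound.
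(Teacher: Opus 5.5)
Your proposal is correct and is the paper's argument: the paper also deduces this directly from the $C^k$ cell decomposition theorem, observing that each cell is connected, so the number of components is bounded by the $\operatorname{poly}_F(E)$ number of cells. You simply spell out the counting step that the paper leaves implicit (each component is a union of cells).
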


\begin{proof}
This follows from \iref{Theorem Ck cell decomposition} since each cell is connected.
\end{proof}

The verbatim analogue of this statement in the o-minimal setting is that every definable set has finitely many connected components. This is true, but in practice the following stronger statement provides a more suitable analogue.

\begin{Cor}\label{Corollary o-minimal connected components}
Let $X\subseteq\mathbb R^a\times\mathbb R^b$ be a definable set in an o-minimal structure. Then the number of connected components of any fiber
\begin{equation*}
    X_\lambda := \{ y\in\mathbb R^b : (\lambda,y)\in X \}
\end{equation*}
is uniformly bounded over all $\lambda\in\mathbb R^a$.
\end{Cor}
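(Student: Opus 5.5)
The plan is to derive uniformity from the uniform finiteness of cell decompositions in families, the o-minimal analogue of the parametric form of \iref{Theorem Ck cell decomposition}. The key point is to apply cell decomposition to the total space $X\subseteq\mathbb R^a\times\mathbb R^b$ rather than to each fiber separately. This way a single finite partition controls every fiber at once.

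First, apply the o-minimal cell decomposition theorem to $X$, using coordinates ordered so that the $\lambda$-coordinates $\mathbb R^a$ come first. This partitions $\mathbb R^{a+b}$ into finitely many cells $C_1,\dots,C_N$ compatible with $X$. Next, I would use the standard fact about cells, which follows from their inductive definition: if $C\subseteq\mathbb R^a\times\mathbb R^b$ is a cell and $\lambda$ lies in its projection $\pi(C)\subseteq\mathbb R^a$, then the fiber $C_\lambda$ is a cell in $\mathbb R^b$. The proof is an induction on $b$. The cell $C$ is either a graph of a continuous definable function $f$ over a cell $D\subseteq\mathbb R^{a+b-1}$, or a band $(f,g)$ over such a $D$. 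The fiber $C_\lambda$ is then the graph of $f(\lambda,\cdot)$, or the band $(f(\lambda,\cdot),g(\lambda,\cdot))$, over the fiber $D_\lambda$. By induction $D_\lambda$ is a cell, and the restricted functions remain continuous with $f<g$. In particular $C_\lambda$ is connected.

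Since $X$ is a union of some of the $C_i$, each fiber $X_\lambda$ is the union of the fibers $(C_i)_\lambda$ over the indices with $C_i\subseteq X$. Each nonempty such fiber is a connected cell, so $X_\lambda$ has at most $N$ connected components. The bound $N$ does not depend on $\lambda$, which proves the claim.

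The only real obstacle is verifying that fibers of cells are cells, which needs the careful induction above. The base case $b=0$ is trivial, since the fiber is then a point. Alternatively, one can cite the standard reference \cite{Vdd1998TameTopologyAndOMinimalStructures} for the uniform finiteness statement directly. In the \so-minimal case the same argument gives the sharper bound $\operatorname{poly}_F(E)$ via \iref{Theorem Ck cell decomposition}.
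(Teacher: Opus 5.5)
Your argument is correct. It is the standard uniform-finiteness argument that the paper implicitly relies on: the paper states this corollary without proof, as the family version of the cell-counting argument used for \iref{Corollary finite amount of connected components}. You decompose the total space $X$ cylindrically with the $\lambda$-coordinates first, use that fibers of cylindrical cells are cells and hence connected, and conclude that every $X_\lambda$ has at most $N$ components.

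You were right to insist on the cylindrical structure and the coordinate ordering. For cells described merely as embedded open cubes, as in \iref{Theorem Ck cell decomposition}, the fibers need not be connected.
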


\subsection{Notation and definitions}

We introduce some notation and definitions that we will use throughout. For $A$ a measurable subset of $\mathbb R^k$, we use $|A|_{\mathbb R^k}$ for the standard Lebesgue measure of $A$ in $\mathbb R^k$. For $x\in\mathbb R^k$, we use $\|x\|_{\mathbb R^k}$ for the Euclidean length of $x$.

\begin{Def}[$\lambda$-segment]
Let $\lambda>0$, and let $a,d\in\mathbb R^m$. We define the $\lambda$-segment starting at $(a,0)$ with direction $(d,1)$ to be the line segment
\[
    l_{a,d}(\lambda)
        :=
    \{(a+t\cdot d,t):t\in[0,\lambda]\}
        \subseteq
    \mathbb R^{m+1}.
\]
\end{Def}

\begin{Notation}[Projection Onto Directions]
Let $l_{a,d}(\lambda)$ be a $\lambda$-segment, and let $L$ be a collection of $\lambda$-segments. We define the projections
\[
    \operatorname{\Pi}_{\mathrm{dir}}(l_{a,d}(\lambda)):=d
\]
and
\[
    \operatorname{\Pi}_{\mathrm{dir}}(L):=\{\operatorname{\Pi}_{\mathrm{dir}}(l):l\in L\}.
\]
\end{Notation}

\begin{Def}
Given $x,y\in\mathbb R^m$, we define $l_{x,y}:=\{(x+t\cdot y,t):t\in\mathbb R\}\subseteq\mathbb R^{m+1}$ to be the (infinite) line passing through $(x,0)$ and pointing in the direction $(y,1)$.
\end{Def}

\begin{Def}[$\beta\times\delta$-tube]
Let $\beta>0,\delta>0$ and let $a\in\mathbb R^m,d\in[0,1]^m$. We define the $\beta\times\delta$-tube centered at $(a,0)$ with direction $(d,1)$ to be
\[
   T_{a,d}(\beta,\delta)
       :=
   \{(x,t)\in\mathbb R^m\times[0,\beta]:\|x-(a+t\cdot d)\|_{\mathbb R^m}\leq\delta\}.
\]
\end{Def}

\begin{Def}[$\delta$-tube]
Let $\delta>0$ and let $l\subseteq\mathbb R^n$ be a line segment of Euclidean length 1. We define $T_l(\delta)\subseteq\mathbb R^n$, the $\delta$-tube centered around $l$, to be the $\delta$-neighborhood of $l$.
\end{Def}

\section{Auxiliary Propositions}

We present the two propositions and one corollary used to establish \iref{Theorem main result}. In the following sections, we will prove them and show that \iref{Theorem main result} follows from them.

\begin{Prop}\label{Proposition union of line segments has large measure}
Let $L$ be a definable collection of $\lambda$-segments. Assume $L$ has degree $E$ and format $F$. Then
\[
    \left|\bigcup_{l_{a,d}(\lambda)\in L}l_{a,d}(\lambda)\right|_{\mathbb R^{m+1}}
        \geq
    \frac{1}{\operatorname{poly}_F(E)}\lambda^{m+1}|\operatorname{\Pi}_{\mathrm{dir}}(L)|_{\mathbb R^m}.
\]
\end{Prop}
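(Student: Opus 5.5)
The plan is to slice the union horizontally and reduce to a lower bound on the measure of images of a definable family of maps $\mathbb R^m\to\mathbb R^m$, controlled by a Jacobian computation and a fiber count. Write $U:=\bigcup_{l\in L}l$ and $D:=\operatorname{\Pi}_{\mathrm{dir}}(L)\subseteq\mathbb R^m$. We may assume $|D|_{\mathbb R^m}>0$; otherwise there is nothing to prove.

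\emph{Choosing one segment per direction.} View $L$ as a definable subset of $\mathbb R^m_a\times\mathbb R^m_d$. By \iref{Theorem definable choice} there is a definable section $a:D\to\mathbb R^m$ of degree $\operatorname{poly}_F(E)$ and format $\mathrm{const}_F$ with $l_{a(d),d}(\lambda)\in L$ for every $d\in D$. By \iref{Theorem Ck cell decomposition} applied to $a$ with $k=1$, we partition $D$ into $\operatorname{poly}_F(E)$ cells on which $a$ is $C^1$. The lower-dimensional cells have measure zero, so we may replace $D$ by the union $D'$ of the open $m$-dimensional cells, on which $a$ is $C^1$ and $|D'|=|D|$.

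\emph{Slicing and the Jacobian.} For $t\in[0,\lambda]$ define $\varphi_t(d):=a(d)+t\,d$. The horizontal slice of $U$ at height $t$ contains $\varphi_t(D')$, so by Fubini
\[
|U|_{\mathbb R^{m+1}}\ \ge\ \int_0^\lambda|\varphi_t(D')|_{\mathbb R^m}\,dt .
\]
The Jacobian of $\varphi_t$ is $J_t(d)=\det(Da(d)+tI)$, which for fixed $d$ is a monic polynomial of degree $m$ in $t$. Every monic polynomial $p$ of degree $m$ satisfies $\int_0^\lambda|p(t)|\,dt\ge c_m\lambda^{m+1}$. This follows by rescaling to $[0,1]$ and compactness: the $L^1([0,1])$ norm is positive on the compact set of monic polynomials whose coefficients are bounded by a suitable constant, and it is large when the coefficients are large, since all norms on a finite-dimensional space are equivalent. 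Fubini then gives
\[
\int_0^\lambda\int_{D'}|J_t(d)|\,dd\,dt\ \ge\ c_m\lambda^{m+1}|D|.
\]

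\emph{Fiber counting (the main obstacle).} It remains to show, for every $t$, that
\[
|\varphi_t(D')|\ \ge\ \frac{1}{N}\int_{D'}|J_t|,\qquad N=\operatorname{poly}_F(E).
\]
By the area formula, $\int_{D'}|J_t|=\int_{\mathbb R^m}\#\{d\in D':\varphi_t(d)=y,\ J_t(d)\neq0\}\,dy$. So it suffices to bound uniformly in $t$ and $y$ the cardinality of
\[
Z_{t,y}:=\{d\in D':\varphi_t(d)=y,\ J_t(d)\neq 0\}.
\]
By the inverse function theorem on each open cell, points of $Z_{t,y}$ are isolated, so $Z_{t,y}$ is a discrete definable set. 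It is a fiber of the definable set $\{(t,y,d)\}$ of degree $\operatorname{poly}_F(E)$ and format $\mathrm{const}_F$, so its fibers have uniformly bounded degree and format. Hence \iref{Corollary finite amount of connected components} bounds its number of connected components, which here is its cardinality, by $\operatorname{poly}_F(E)$. In the o-minimal case \iref{Corollary o-minimal connected components} gives a uniform bound instead. The delicate points are verifying that the bound holds uniformly in the parameters $(t,y)$ and that the area formula applies to the merely piecewise-$C^1$ map (it does cell by cell, summing over at most $\operatorname{poly}_F(E)$ cells). Combining the three displays yields
\[
|U|\ \ge\ \frac{c_m}{\operatorname{poly}_F(E)}\,\lambda^{m+1}|D|,
\]
which is the claim.
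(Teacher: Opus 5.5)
Your proof is correct and follows essentially the same route as the paper: definable choice of one segment per direction, Fubini over the height $t$, the Jacobian $\det(Da+tI)$ being monic of degree $m$ in $t$, and a uniform \#o-minimal bound on nondegenerate preimages obtained by counting connected components of fibers. The differences are cosmetic: you sum over all top-dimensional $C^1$ cells via the area formula instead of pigeonholing to a single large cell $C$, you get $\int_0^\lambda|p(t)|\,dt\ge c_m\lambda^{m+1}$ by compactness rather than by excluding $\frac{\lambda}{4m}$-neighborhoods of the roots, and you count $Z_{t,y}$ directly (which uses that $Da$ is definable with controlled complexity), whereas the paper bounds the isolated points of the full fiber $g_t^{-1}(y)$ by that fiber's number of components.
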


For \iref{Proposition S contains segments in every direction} and \iref{Corollary tube intersection S has lambda segments}, we will use the notation $\overline{B_{x, r}^{\mathbb R^m}}$ to denote the closed ball in $\mathbb R^m$ centered at $x$ of radius $r$, and the notation $\operatorname{Tr}_{t}$ to denote the translation operator by $t$ along the $(m+1)$-th coordinate. That is,
\[
    \operatorname{Tr}_t(x_0,...,x_m,x_{m+1}):=(x_0,...,x_m,x_{m+1}+t).
\]

\begin{Prop}\label{Proposition S contains segments in every direction}
Let $1\geq\lambda,\delta>0$ and let $S\in\mathbb R^{m+1}$ be a definable set of degree $E$ and format $F$. Assume that for some $1\times\delta$-tube $T_{a_0,d_0}(1, \delta)\subseteq\mathbb R^{m+1}$ and for some $t_0\in\mathbb R$, we have that $|S\cap \operatorname{Tr}_{t_0}\left(T_{a_0,d_0}(1, \delta)\right)|_{\mathbb R^{m+1}}\geq\lambda|T_{a_0,d_0}(1, \delta)|_{\mathbb R^{m+1}}$. Then,
for any $d\in\overline{B_{d_0, \frac{\delta}{2}}^{\mathbb R^m}}$, $S$ must contain some translation along the $(m+1)$-th coordinate of a $(\frac1{\operatorname{poly}_F(E)}\lambda)$-segment pointing in the direction $d$ (the constant being independent of $d$).
\end{Prop}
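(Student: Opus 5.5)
The plan is a Fubini argument that slices the tube by lines in the direction $d$, followed by an o-minimal step that turns a large one-dimensional measure into a single long interval.

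\emph{Covering the tube by lines in direction $d$.} Fix $d\in\overline{B_{d_0,\delta/2}^{\mathbb R^m}}$. Every point of $T_{a_0,d_0}(1,\delta)$ has the form $(x,t)$ with $t\in[0,1]$ and $\|x-(a_0+td_0)\|\le\delta$. Set $a:=x-td$. Then
\[
\|a-a_0\|\le\|x-a_0-td_0\|+t\|d-d_0\|\le\delta+\delta/2 .
\]
So the translated tube $\operatorname{Tr}_{t_0}(T_{a_0,d_0}(1,\delta))$ is contained in the union over $a\in B:=\overline{B^{\mathbb R^m}_{a_0,3\delta/2}}$ of the sets $\{(a+td,t+t_0):t\in[0,1]\}$.

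\emph{Fubini step.} The map $(a,t)\mapsto(a+td,t+t_0)$ is an affine shear with Jacobian $1$. For $a\in B$ define
\[
I_a:=\{t\in[0,1]:(a+td,\,t+t_0)\in S\}.
\]
Fubini then gives
\[
\int_B|I_a|_{\mathbb R}\,da\ \ge\ |S\cap\operatorname{Tr}_{t_0}(T_{a_0,d_0}(1,\delta))|\ \ge\ \lambda\,|T_{a_0,d_0}(1,\delta)|\ \ge\ \lambda\,\omega_m\delta^m ,
\]
where $\omega_m$ is the volume of the unit ball in $\mathbb R^m$. The last inequality holds because each horizontal slice of the tube contains a $\delta$-ball. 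Since $|B|=\omega_m(3/2)^m\delta^m$, some $a\in B$ satisfies $|I_a|\ge(2/3)^m\lambda$.

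\emph{O-minimal step.} The set $I_a\subseteq\mathbb R$ is a fiber of the definable family
\[
\{(a,d,t_0,t):t\in[0,1],\ (a+td,t+t_0)\in S\}.
\]
This family is the preimage of $S$ under a polynomial map intersected with $[0,1]$, so it has format $\mathrm{const}_F$ and degree $\operatorname{poly}_F(E)$. Its fibers therefore also have degree $\operatorname{poly}_F(E)$ and format $\mathrm{const}_F$, uniformly in the parameters $(a,d,t_0)$. By \iref{Corollary finite amount of connected components}, $I_a$ has at most $\operatorname{poly}_F(E)$ connected components, each an interval or a point. Hence one component contains an interval $[t_1,t_1+\mu]$ with $\mu\ge(2/3)^m\lambda/\operatorname{poly}_F(E)=\lambda/\operatorname{poly}_F(E)$, absorbing $(2/3)^m$ into the polynomial since $m$ is bounded by the format. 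In the o-minimal case one uses \iref{Corollary o-minimal connected components} instead, which gives a bound uniform in the parameters. Then
\[
\{(a+td,t+t_0):t\in[t_1,t_1+\mu]\}=\operatorname{Tr}_{t_0+t_1}\bigl(l_{a+t_1d,\,d}(\mu)\bigr)\subseteq S,
\]
which is a vertically translated $\mu$-segment in direction $d$. Shrinking $\mu$ to the uniform value $\lambda/\operatorname{poly}_F(E)$ keeps the segment inside $S$.

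\emph{Main obstacle.} The delicate point is making the constant independent of $d$, $a$ and $t_0$. This requires that the degree bound for the fibers $I_a$ be uniform in the parameters, which I will justify by treating the parameters as variables of the single definable family above. The covering and Fubini steps are routine.
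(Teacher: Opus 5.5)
Your proposal is correct and follows essentially the same route as the paper. You cover the translated tube by segments in direction $d$ and use Fubini/pigeonhole to find one line whose intersection with $S$ has length at least $\lambda/\mathrm{const}_m$. The paper bounds the volume of the prism over the convex hull of $\overline{B_{a_0,\delta}}$ and $\overline{B_{a_0+(d_0-d),\delta}}$, whereas you use the slightly larger base $\overline{B_{a_0,3\delta/2}}$ and an explicit unit-Jacobian shear. You then apply the $\operatorname{poly}_F(E)$ bound on connected components of that one-dimensional definable fiber to extract a long segment. The one cosmetic point, which the paper shares, is that the longest component may be an open interval, so you should take, say, half its length to get a closed segment; this loss is absorbed into the constant.
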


Note that this proposition implies the following corollary:

\begin{Cor}\label{Corollary tube intersection S has lambda segments}
Let $h_{F,E}$ be half of the $\frac1{\operatorname{poly}_F(E)}$-constant from \iref{Proposition S contains segments in every direction}. We define for each $i\in\mathbb Z$:
\[
    S^i
        :=
    \operatorname{Tr}_{-i\cdot(h_{F,E}\lambda)}(S)\bigcap\left(\mathbb R^m\times\left[0,h_{F,E}\lambda\right]\right)
\]
(so that the translations of $S^i$ form a decomposition of $S$ along the $(m+1)$-th coordinate), and
\[
    L^i
        :=
    \{l_{a,d}\left(h_{F,E}\lambda\right):l_{a,d}\left(h_{F,E}\lambda\right)\subseteq S^i\}.
\]

Then
\[
    \bigcup_{i\in\mathbb Z}\operatorname{\Pi}_{\mathrm{dir}}(L^i)
        \supseteq
    \overline{B_{d_0, \frac{\delta}{2}}^{\mathbb R^m}}.
\]
\end{Cor}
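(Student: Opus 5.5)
Fix an arbitrary $d\in\overline{B_{d_0,\frac{\delta}{2}}^{\mathbb R^m}}$. We need an index $i\in\mathbb Z$ and a segment $l_{a,d}(h_{F,E}\lambda)\subseteq S^i$. By \iref{Proposition S contains segments in every direction}, $S$ contains $\operatorname{Tr}_{s}(l_{a',d}(2h_{F,E}\lambda))$ for some $a'\in\mathbb R^m$ and $s\in\mathbb R$. Here $2h_{F,E}\lambda$ is exactly the length $\frac1{\operatorname{poly}_F(E)}\lambda$ produced by that proposition. This translated segment is $\{(a'+t d,\,t+s):t\in[0,2h_{F,E}\lambda]\}$, so its last coordinate sweeps the interval $[s,s+2h_{F,E}\lambda]$, which has length $2h$ (writing $h:=h_{F,E}\lambda$).

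Any interval of length $2h$ contains a full grid interval $[ih,(i+1)h]$ for some $i\in\mathbb Z$. Take $i=\lceil s/h\rceil$; then $s\le ih$ and $(i+1)h< s+2h$. Restrict the segment to parameters $t\in[ih-s,\,(i+1)h-s]$, a subinterval of $[0,2h]$. This gives a subsegment contained in $S$ whose last coordinate runs over $[ih,(i+1)h]$. Apply $\operatorname{Tr}_{-ih}$ and reparametrize by $u=t+s-ih\in[0,h]$. The result is the set $\{(a+ud,\,u):u\in[0,h]\}$ with $a:=a'+(ih-s)d$, which is exactly $l_{a,d}(h)$. It lies in $\operatorname{Tr}_{-ih}(S)\cap(\mathbb R^m\times[0,h])=S^i$, so $l_{a,d}(h)\in L^i$ and $d\in\operatorname{\Pi}_{\mathrm{dir}}(L^i)$. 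Since $d$ was arbitrary, the union over $i$ contains the whole ball.

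The only delicate point is the choice of $h_{F,E}$. It must be a single constant, independent of $d$, so that the grid $\{ih\}$ used to define the $S^i$ is the same for every direction. The statement of \iref{Proposition S contains segments in every direction} guarantees this independence. Halving the constant is what makes the pigeonholing onto a grid cell possible: a segment of length only $h$ could straddle two cells without containing either. The endpoint bookkeeping (closed intervals, the possibility $s/h\in\mathbb Z$) is routine with the ceiling choice above.
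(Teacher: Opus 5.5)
Your proof is correct and follows the paper's argument. You apply the Proposition to get a vertically translated segment of length $2h_{F,E}\lambda$ in direction $d$, then extract a subsegment spanning a full grid cell $[ih,(i+1)h]$ and translate it into $S^i$. Your ceiling choice of $i$ and the reparametrization spell out precisely the step the paper's one-line proof leaves implicit.
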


\begin{proof}
By \iref{Proposition S contains segments in every direction}, we have, for each $\alpha\in\overline{B_{d_0, \frac{\delta}{2}}^{\mathbb R^m}}$, that $S$ must contain a translation along the $(m+1)$-th coordinate of a $(2h_{F,E}\lambda)$-segment pointing in the direction $\alpha$. Thus, $\alpha$ will belong to $\operatorname{\Pi}_{\mathrm{dir}}(L^i)$ for some $i$, and the result follows.
\end{proof}

\section{Proof of \texorpdfstring{\iref{Proposition union of line segments has large measure}}{the First Auxiliary Proposition}}

\begin{proof}
We think of $\operatorname{\Pi}_{\mathrm{dir}}$ as a definable map from $\mathbb R^m\times\mathbb R^m$ to $\mathbb R^m$. Notice that $\operatorname{\Pi}_{\mathrm{dir}}(L)$ is of degree $\operatorname{poly}_F(E)$ and format $\mathrm{const}_F$. By \iref{Theorem definable choice} (definable choice), we may choose a section
\[
    f:\operatorname{\Pi}_{\mathrm{dir}}(L)\rightarrow\mathbb R^m
\]
such that for every $d\in\operatorname{\Pi}_{\mathrm{dir}}(L)$ we have $l_{f(d),d}\in L$. Moreover, the degree of $f$ is $\operatorname{poly}_F(E)$ and the format is $\mathrm{const}_F$. We define $g:[0,\lambda]\times\operatorname{\Pi}_{\mathrm{dir}}(L)\rightarrow\mathbb R^m$ by
\begin{equation*}
    g_t(x) = g(t,x) := f(x)+t\cdot x.
\end{equation*}
Note that for each $t$, the degree of the map $g_t$ is at most $\operatorname{poly}_F(E)$ and the format is $\mathrm{const}_F$ (independently of $t$). We calculate as at the end of the proof of \cite[Theorem 3.1]{KatzRogers2018OnThePolynomialWolffAxioms}, using Fubini's theorem:
\begin{equation}\label{Equation integral dt}\begin{aligned}
    \left|\bigcup_{l_{a,d}(\lambda)\in L}l_{a,d}(\lambda)\right|_{\mathbb R^{m+1}}
        &\geq
    \left|\bigcup_{x\in\operatorname{\Pi}_{\mathrm{dir}}(L)}l_{f(x),x}(\lambda)\right|_{\mathbb R^{m+1}}
        \\&=
    \int_0^\lambda\left|g_t(\operatorname{\Pi}_{\mathrm{dir}}(L))\right|_{\mathbb R^m}dt.
\end{aligned}\end{equation}

We therefore seek, for each $t$, a lower bound for $\left|g_t(\operatorname{\Pi}_{\mathrm{dir}}(L))\right|_{\mathbb R^m}$.

By \iref{Theorem Ck cell decomposition} ($C^k$ cell decomposition), we may split $\operatorname{\Pi}_{\mathrm{dir}}(L)$ into $\operatorname{poly}_F(E)$ cells, where $f$ is $C^1$ on each cell, with each cell (uniformly) having degree $\operatorname{poly}_F(E)$ and format $\mathrm{const}_F$. By the pigeonhole principle, one of these cells, $C$, must have measure at least
\begin{equation}\label{Equation C has large measure}
    |C|_{\mathbb R^m}\geq\frac{|\operatorname{\Pi}_{\mathrm{dir}}(L)|_{\mathbb R^m}}{\operatorname{poly}_F(E)}.
\end{equation}

Let $y\in g_t\left(\operatorname{\Pi}_{\mathrm{dir}}(L)\right)$. Consider the fiber $g_t^{-1}(y)$ over $y$. It is of degree $\operatorname{poly}_F(E)$ and format $\mathrm{const}_F$. Thus, by \iref{Corollary finite amount of connected components}, it must be a union of at most $\operatorname{poly}_F(E)$ connected components (independently of $y$ or $t$).

We wish to relate $|g_t(C)|_{\mathbb R^m}$ to the integral $\int_{x\in C}|\det(D_x(g_t))|dx$, where $D_x$ is the differential at $x$. Every point $x\in C$ where $\det(D_x(g_t))\neq 0$ must be an isolated point in the fiber over $g_t(x)$ (and thus such $x$'s can cover $g_t(x)$ at most $\operatorname{poly}_F(E)$ times). Also, the points $x\in C$ where $\det(D_x(g_t))=0$ do not contribute to the integral. Thus, we have
\begin{align*}
    |g_t(\operatorname{\Pi}_{\mathrm{dir}}(L))|_{\mathbb R^m}
        &\geq
    |g_t(C)|_{\mathbb R^m}
        \\&\geq
    \frac{1}{\operatorname{poly}_F(E)}
    \int_{x\in C}
    |\det(D_x(g_t))|
    dx,
\end{align*}
and by substituting this in \iref{Equation integral dt} and using Fubini's theorem, we obtain
\begin{equation}\label{Equation bound with det}\begin{aligned}
    \left|\bigcup_{l_{a,d}(\lambda)\in L}l_{a,d}(\lambda)\right|_{\mathbb R^{m+1}}
        &\geq
    \frac{1}{\operatorname{poly}_F(E)}
    \int_0^\lambda\int_{x\in C}|\det(D_x(g_t))|dxdt
        \\&=
    \frac{1}{\operatorname{poly}_F(E)}
    \int_{x\in C}\int_0^\lambda|\det(D_x(g_t))|dtdx.
\end{aligned}\end{equation}

We calculate:
\[
    D_x(g_t)
        =
    D_x(f(x)+t\cdot x)
        =
    D_x(f)
        +
    t\cdot I_{m\times m},
\]
where $I$ is the identity matrix.

So, we see for each $x$ that $\det(D_x(g_t))$ is a monic polynomial in $t$ of degree $m$, and thus, by excluding neighborhoods of radius $\frac{\lambda}{4m}$ around its zeros, we see that
\[
    \int_0^\lambda|\det(D_x(g_t))|dt
        \geq
    \frac{\lambda}{2}\cdot\left(\frac{\lambda}{4m}\right)^m
        =
    \frac{\lambda^{m+1}}{\mathrm{const}_{m}}.
\]

Combining this with \iref{Equation bound with det} and \iref{Equation C has large measure}, we obtain
\begin{align*}
    \left|\bigcup_{l_{a,d}(\lambda)\in L}l_{a,d}(\lambda)\right|_{\mathbb R^{m+1}}
        &\geq
    \frac{1}{\operatorname{poly}_F(E)}\lambda^{m+1}
    \int_{x\in C} 1dx
        \\&=
    \frac{1}{\operatorname{poly}_F(E)}\lambda^{m+1}|C|_{\mathbb R^m}
        \\&\geq
    \frac{1}{\operatorname{poly}_F(E)}\lambda^{m+1}|\operatorname{\Pi}_{\mathrm{dir}}(L)|_{\mathbb R^m}.
\end{align*}
\end{proof}

\section{Proof of \texorpdfstring{\iref{Proposition S contains segments in every direction}}{the Second Auxiliary Proposition}}

\begin{proof}
Let $L_d:=\{l_{x,d}(1)\subseteq\mathbb R^{m+1}:l_{x,d}\cap T_{a_0,d_0}(1,\delta)\neq\emptyset\}$ be a collection of $1$-segments in the direction $d$ that covers $T_{a_0,d_0}(1,\delta)$. Notice that $\bigcup_{l_{x,d}(1)\in L_d}l_{x,d}(1)$ is a prism whose base is the convex hull of $\overline{B_{a_0,\delta}}$ and $\overline{B_{a_0+(d_0-d),\delta}}$. Thus (since $\|d_0-d\|_{\mathbb R^m}\leq\frac{\delta}2$), the volume of $\bigcup_{l_{x,d}(1)\in L_d}l_{x,d}(1)$ is at most $\mathrm{const}_m\cdot\delta^m$. Thus, since
\begin{align*}
|S\cap\operatorname{Tr}_{t_0}\left(T_{a_0,d_0}(1, \delta)\right)|_{\mathbb R^{m+1}}
    &\geq
\lambda|T_{a_0,d_0}(1, \delta)|_{\mathbb R^{m+1}}
    \\&\geq
\frac1{\mathrm{const}_m}\cdot\lambda\cdot\delta^m,
\end{align*}
and since $\bigcup_{l_{x,d}(1)\in L_d}\operatorname{Tr}_{t_0}(l_{x,d}(1))\supseteq \operatorname{Tr}_{t_0}\left(T_{a_0,d_0}(1, \delta)\right)$, we obtain
\[
    \left|S\cap\bigcup_{l_{x,d}(1)\in L_d}\operatorname{Tr}_{t_0}(l_{x,d}(1))\right|_{\mathbb R^{m+1}}
        \geq
    \frac{\lambda}{\mathrm{const}_m}\cdot
    \left|\bigcup_{l_{x,d}(1)\in L_d}\operatorname{Tr}_{t_0}(l_{x,d}(1))\right|_{\mathbb R^{m+1}}.
\]

By the pigeonhole principle, there is some $l_{x_0,d}(1)\in L_d$ such that
\[
    |\{t:(x_0+t\cdot d,t_0+t)\in S\}|_{\mathbb R}
        \geq
    \frac{\lambda}{\mathrm{const}_m}.
\]

Since the set $\{t:(x_0+t\cdot d,t_0+t)\in S\}$ is of degree $\operatorname{poly}_F(E)$ and format $\mathrm{const}_F$, by \iref{Corollary finite amount of connected components}, it can have at most $\operatorname{poly}_F(E)$ connected components. So, by the pigeonhole principle, it must contain a closed segment of length at least $\frac{\lambda}{\operatorname{poly}_F(E)}$.
\end{proof}

\section{Proof of \texorpdfstring{\iref{Theorem main result}}{the Main Theorem}}

We define $m:=n-1$ (so that $\mathbb R^n=\mathbb R^{m+1}$).

\begin{Claim}
We are allowed to make the assumption (and indeed make it) that $\mathbb T$ actually consists of translations along the $(m+1)$-th coordinate of $1\times\delta$-tubes that satisfy
\begin{align*}
    \forall\operatorname{Tr}_x(T)\in\mathbb T:&\\
    &|S\cap\operatorname{Tr}_x(T)|_{\mathbb R^{m+1}}
        \geq
    \lambda|T|_{\mathbb R^{m+1}}
\end{align*}
and
\begin{align*}
    \forall\,\operatorname{Tr}_{x_1}(T_{a_1,d_1}(1,\delta))\neq\operatorname{Tr}_{x_2}(T_{a_2,d_2}(1,\delta))\in\mathbb T:&\\
    &\|d_1-d_2\|_{\mathbb R^m}
        \geq
    2\delta.
\end{align*}
\end{Claim}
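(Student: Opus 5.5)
We reduce the general statement of \iref{Theorem main result} to this normalized form by three losses of constants depending only on $n$, each of which can be absorbed into $C(S)$ since $\#\mathbb T$ enters linearly.

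\emph{Step 1: fix the direction cone.} Cover the unit sphere of directions by $\mathrm{const}_n$ caps, each within a small angle of a coordinate axis. By pigeonhole, at least $\#\mathbb T/\mathrm{const}_n$ tubes have central lines in one cap. Apply a rotation (possibly composed with a coordinate reflection) of $\mathbb R^n$ sending that cap near the $e_{m+1}$ direction. The rotation preserves Lebesgue measure, angles, and the o-minimal/\so-minimal degree and format of $S$ up to $\mathrm{const}_F$. Indeed, a linear map is definable of bounded degree, and composing with it changes degree only polynomially. After this, every central line has direction $(d,1)/\|(d,1)\|$ with $d$ in a fixed bounded set, say $d\in[-1,1]^m$. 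After translating directions (a shear $(x,t)\mapsto(x-ct,t)$, which is measure preserving and also definable), we may take $d\in[0,1]^m$ as the definition of $T_{a,d}$ requires.

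\emph{Step 2: compare tube shapes.} A $\delta$-tube $T_l(\delta)$ with $l$ of length $1$ and slope $d$ bounded projects onto the $t$-axis as an interval of length $\ge 1/\mathrm{const}_m$. It contains a translate $\operatorname{Tr}_{x}(T_{a,d}(c_m,c_m\delta))$ of a slanted tube with height $c_m$ and horizontal radius $c_m\delta$. Conversely, it is contained in a union of $\mathrm{const}_m$ such translates of $T_{a,d}(1,\delta)$-type pieces; rescaling $\delta$ by a constant also changes volumes only by $\mathrm{const}_m$. I would prefer to avoid the height issue by splitting each $\delta$-tube into $\mathrm{const}_m$ consecutive pieces, each a translate of a $T_{a,d}(\beta,\delta')$ with $\beta\le1$, together with the leftover caps. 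At least one piece carries a $\lambda/\mathrm{const}_m$ fraction of its own volume in $S$. We then enlarge that piece to a full $1\times\delta$-tube $\operatorname{Tr}_x(T_{a,d}(1,\delta))$ containing it, whose volume is comparable, so the density is still $\ge\lambda/\mathrm{const}_m$. Replacing $\lambda$ by $\lambda/\mathrm{const}_m$ only costs $\mathrm{const}_m^n$ in the final bound $\lambda^n$. The main obstacle is exactly this bookkeeping: making sure the enlarged tube's volume is comparable to $|T|$ and that the density hypothesis is stated relative to $|T_{a_0,d_0}(1,\delta)|$ as required, including tubes with $\delta$ close to $1$, where caps are not negligible.

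\emph{Step 3: separation $2\delta$.} After Steps 1 and 2, the map from unit direction to $d$ is bi-Lipschitz with constants depending on $m$ on the bounded region. So $\delta$-separated angles give $\|d_1-d_2\|\ge\delta/\mathrm{const}_m$. Partition $\mathbb R^m$ into cubes of side $\delta/\mathrm{const}'_m$ and color the cubes by residues modulo $\mathrm{const}''_m$ so that cubes of the same color are at distance $\ge2\delta$. Each cube contains at most $\mathrm{const}_m$ directions, so choosing one tube per cube in the most popular color class retains $\#\mathbb T/\mathrm{const}_m$ tubes with $\|d_1-d_2\|\ge2\delta$. Since the conclusion only needs a lower bound proportional to the remaining $\#\mathbb T$, all three reductions together lose only a factor $\mathrm{const}_n$, which proves the claim.
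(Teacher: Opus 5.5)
Your proposal is correct and follows essentially the same route as the paper. Both arguments pigeonhole the directions into $\mathrm{const}_m$ cones and rotate, then cover each tube by $\mathrm{const}_m$ vertically translated $1\times\delta$-tubes and pigeonhole to keep density $\geq\lambda/\mathrm{const}_m$, then thin out the directions (the paper greedily, you by a grid colouring) to get $\|d_1-d_2\|_{\mathbb R^m}\geq2\delta$.

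The bookkeeping you worry about in Step~2 is already settled by the covering you state first: all of $T_l$, caps included, is covered by $\mathrm{const}_m$ sets $\operatorname{Tr}_s(T_{a',d}(1,\delta))$ with varying $a'$, each of volume at most $|T_l|_{\mathbb R^{m+1}}$, uniformly in $\delta\leq1$. So the splitting/enlarging detour is unnecessary, and as literally stated it is slightly off, since horizontal slices of $T_l$ are ellipses with semi-axis up to $\delta\sqrt{1+\|d\|^2}$. The shear can likewise be avoided by rotating the cap directly into the cone over $[0,1]^m$, as the paper does.
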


\begin{proof}
By the pigeonhole principle, we may choose some rotation of $\mathbb R^{m+1}$ and some subset of $\mathbb T$ of size at least $\frac1{\mathrm{const}_m}\#\mathbb T$ such that for each tube $T_l$ in this subset, the direction of $l$ (not normalized) may be written as an element of $[0,1]^m\times\{1\}$. For this reason, we are allowed to assume in this proof that for each tube $T_l\in\mathbb T$, the unit line segment $l$ points in a direction that belongs to the set $[0,1]^m\times\{1\}$. We now take some cover of $T_l$ with a $\mathrm{const}_m$ amount of translations along the $(m+1)$-th coordinate of $1\times\delta$-tubes that point in the direction of $l$. Since the sum of the volumes of these shapes is at most $\mathrm{const}_m\cdot|T_l|_{\mathbb R^{m+1}}$, we get that one of these shapes, denoted by $T_l'$, must satisfy that $|T_l'\cap S|_{\mathbb R^{m+1}}\geq\frac1{\mathrm{const_m}}\cdot\lambda|T_l'|_{\mathbb R^{m+1}}$. This means that by modifying $\lambda$, we can also assume in this proof that each tube is actually a translation along the $(m+1)$-th coordinate of a $1\times\delta$-tube. Finally, using a greedy strategy, we choose a subset of $\mathbb T$ of size at least $\frac1{\mathrm{const}_m}\#\mathbb T$ such that the angle between any two tubes in the subset is at least $\mathrm{const}_m\cdot\delta$, where we choose this constant appropriately so that this will imply that $\|d_1-d_2\|_{\mathbb R^m}\geq 2\delta$ (where the tubes point in the directions $(d_1,1),(d_2,1)$), and we are done.
\end{proof}

We now proceed to prove the main theorem.

\begin{proof}[Proof of \iref{Theorem main result}]
We use the definitions of $S^i$, $L^i$, and $h_{F,E}$ from the statement of \iref{Corollary tube intersection S has lambda segments}.

By \iref{Corollary tube intersection S has lambda segments}, we have that for each tube $\operatorname{Tr}_{x}(T_{a,d}(1,\delta))\in\mathbb T$:
\[
    \bigcup_{i\in\mathbb Z}\operatorname{\Pi}_{\mathrm{dir}}(L^i)
        \supseteq
    \overline{B_{d, \frac{\delta}{2}}^{\mathbb R^m}}.
\]

Since any two distinct tubes in $\mathbb T$ satisfy $\|d_1-d_2\|_{\mathbb R^m}\geq 2\delta$, we have
\[
    \sum_{i\in\mathbb Z}\left|\operatorname{\Pi}_{\mathrm{dir}}(L^i)\right|_{\mathbb R^m}
        \geq
    \left|\bigcup_{i\in\mathbb Z}\operatorname{\Pi}_{\mathrm{dir}}(L^i)\right|_{\mathbb R^m}
        \geq
    \frac1{\mathrm{const}_m}\cdot\#\mathbb T\cdot\delta^m.
\]

Notice that the degree of $L^i$ is $\operatorname{poly}_F(E)$ (uniformly in $i$) and the format is $\mathrm{const}_F$ (uniformly in $i$), and that $L^i$ is a collection of $(h_{F,E}\lambda)$-segments that are contained in $S^i$. Applying \iref{Proposition union of line segments has large measure}, we obtain
\begin{align*}
    |S^i|_{\mathbb R^{m+1}}
        &\geq
    \left|\bigcup_{l_{a,d}(h_{F,E}\lambda)\in L^i}l_{a,d}(h_{F,E}\lambda)\right|_{\mathbb R^{m+1}}
        \\&\geq
    \frac{1}{\operatorname{poly}_F(E)}\left(h_{F,E}\lambda\right)^{m+1}|\operatorname{\Pi}_{\mathrm{dir}}(L^i)|_{\mathbb R^m}.
\end{align*}

Combining these two equations, we obtain
\begin{align*}
    |S|_{\mathbb{R}^n} 
    = |S|_{\mathbb{R}^{m+1}}
    = \sum_{i \in \mathbb{Z}} |S^i|_{\mathbb{R}^{m+1}}
    &\geq \frac{1}{\operatorname{poly}_F(E)} \left(h_{F,E} \lambda \right)^{m+1} \cdot \sum_{i \in \mathbb{Z}} |\operatorname{\Pi}_{\mathrm{dir}}(L^i)|_{\mathbb{R}^m} \\
    &\geq \frac{1}{\operatorname{poly}_F(E)} \cdot \#\mathbb{T} \cdot \lambda^{m+1} \cdot \delta^m
    \\&= \frac{1}{\operatorname{poly}_F(E)} \cdot \#\mathbb{T} \cdot \lambda^n \cdot \delta^{n-1}.
\end{align*}
\end{proof}

\bibliographystyle{plainnat}
\bibliography{references}

\begin{thebibliography}{7}
\providecommand{\natexlab}[1]{#1}
\providecommand{\url}[1]{\texttt{#1}}
\expandafter\ifx\csname urlstyle\endcsname\relax
  \providecommand{\doi}[1]{doi: #1}\else
  \providecommand{\doi}{doi: \begingroup \urlstyle{rm}\Url}\fi

\bibitem[Binyamini et~al.(2026)Binyamini, Novikov, and Zak]{V2BinyaminiNovikovZak2026SharplyOMminimalStructuresAndSharpCellularDecomposition}
G.~Binyamini, D.~Novikov, and B.~Zak.
\newblock Sharply o-minimal structures and sharp cellular decomposition.
\newblock \emph{arXiv:2209.10972v2}, 2026.
\newblock \doi{10.48550/arXiv.2209.10972}.

\bibitem[Coste(1999)]{Coste1999Ominimal}
Michel Coste.
\newblock An introduction to o-minimal geometry.
\newblock HAL, 1999.
\newblock URL \url{https://hal.science/hal-05413940}.
\newblock HAL: hal-05413940.

\bibitem[Guth and Zahl(2018)]{GuthZahl2018PolynomialWolffAxiomsAndKakeyaTypeEstimatesInR4}
L.~Guth and J.~Zahl.
\newblock Polynomial wolff axioms and kakeya-type estimates in {$\mathbb R^4$}.
\newblock \emph{Proceedings of the London Mathematical Society}, 117:\penalty0 192--220, 2018.
\newblock \doi{10.1112/plms.12138}.

\bibitem[Hickman et~al.(2022)Hickman, Rogers, and Zhang]{HickmanRogersZhang2022ImprovedBoundsForTheKakeyaMaximalConjectureInHigherDimensions}
J.~Hickman, K.M. Rogers, and R.~Zhang.
\newblock Improved bounds for the kakeya maximal conjecture in higher dimensions.
\newblock \emph{American Journal of Mathematics}, 144\penalty0 (6):\penalty0 1511--1560, 2022.
\newblock \doi{10.1353/ajm.2022.0037}.

\bibitem[Katz and Rogers(2018)]{KatzRogers2018OnThePolynomialWolffAxioms}
N.H. Katz and K.M. Rogers.
\newblock On the polynomial wolff axioms.
\newblock \emph{Geometric and Functional Analysis}, 28:\penalty0 1706–1716, 2018.
\newblock \doi{10.1007/s00039-018-0466-7}.

\bibitem[van~den Dries(1998)]{Vdd1998TameTopologyAndOMinimalStructures}
L.P.D. van~den Dries.
\newblock \emph{Tame Topology and O-minimal Structures}.
\newblock Cambridge University Press, 1998.
\newblock \doi{10.1017/CBO9780511525919}.

\bibitem[Wongkew(1993)]{Wongkew1993VolumesOfTubularNeighbourhoodsOfRealAlgebraicVarieties}
R.A. Wongkew.
\newblock Volumes of tubular neighbourhoods of real algebraic varieties.
\newblock \emph{Pacific Journal of Mathematics}, 159\penalty0 (1):\penalty0 177–184, 1993.
\newblock \doi{10.2140/pjm.1993.159.177}.

\end{thebibliography}

\end{document}